\definecolor{red}{rgb}{1,0,0}
\theoremstyle{plain}
\newtheorem{theorem}{Theorem}[section]
\newtheorem{proposition}[theorem]{Proposition}
\newtheorem{remark}[theorem]{Remark}
\newtheorem{lemma}[theorem]{Lemma}
\newtheorem{definition}[theorem]{Definition}
\newtheorem{example}[theorem]{Example}
\newcommand{\HH}{\mathbb H}
\newcommand{\NN}{\mathbb N}
\newcommand{\PP}{\mathbb P}
\newcommand{\ZZ}{\mathbb Z}
\newcommand{\cA}{\mathcal A}
\newcommand{\cB}{\mathcal B}
\newcommand{\cC}{\mathcal C}
\newcommand{\cE}{\mathcal E}
\newcommand{\cF}{\mathcal F}
\newcommand{\cG}{\mathcal G}
\newcommand{\cI}{\mathcal I}
\newcommand{\cK}{\mathcal K}
\newcommand{\cO}{\mathcal O}
\newcommand{\cP}{\mathcal P}
\newcommand{\cQ}{\mathcal Q}
\newcommand{\imic}{\cong}
\newcommand{\To}{\longrightarrow}
\newcommand{\Proj}{\mathop{\null\mathrm {Proj}}\nolimits}
\newcommand{\Ext}{\mathop{\mathrm {Ext}}\nolimits}
\newcommand{\Hom}{\mathop{\mathrm {Hom}}\nolimits}
\newcommand{\hcf}{\mathop{\mathrm {hcf}}\nolimits}
\newcommand{\im}{\mathop{\mathrm {im}}\nolimits}
\newcommand{\lcm}{\mathop{\mathrm {lcm}}\nolimits}
\newcommand{\bdr}{\mathbf r}
\newcommand{\bdw}{\mathbf w}
\newcommand{\monad}[5]{{{#1} \overset{#2}{\lhook\joinrel\To}{#3}\overset{#4}{\relbar\joinrel\twoheadrightarrow}{#5}}}
\newcommand{\w}{w}
\newcommand{\Hst}{H_*}
\newcommand{\wreg}{\mathop{\null\mathrm {Wreg}}\nolimits}
\begin{document}
\title{Weighted Castelnuovo-Mumford Regularity and Weighted Global Generation}
\author{F.~Malaspina and G.K.~Sankaran
\vspace{6pt}\\
 {\small  Politecnico di Torino}\\
{\small\it  Corso Duca degli Abruzzi 24, 10129 Torino, Italy}\\
{\small\it e-mail: francesco.malaspina@polito.it}\\
\vspace{6pt}\\
{\small   University of Bath}\\
 {\small\it Bath BA2 7AY, England}\\
{\small\it e-mail: gks@maths.bath.ac.uk}}
\maketitle \def\thefootnote{}
\footnote{\noindent Mathematics Subject Classification 2010: 14F05, 14J60.
\\  Keywords: Castelnuovo-Mumford regularity, weighted projective spaces.}

\begin{abstract}
We introduce and study a notion of Castelnuovo-Mumford regularity
suitable for weighted projective spaces.
\end{abstract}
\section{Introduction}\label{sect:intro}
In chapter~14 of~\cite{Mumford} Mumford introduced the concept of regularity
for a coherent sheaf on a projective space $\PP^n$. It was soon clear
that it was a key notion and a fundamental tool in many areas of
algebraic geometry and commutative algebra.

From the algebraic geometry point of view, regularity measures the
complexity of a sheaf: the regularity of a coherent sheaf is an
integer that estimates the smallest twist for which the sheaf is
generated by its global sections. In Castelnuovo's much earlier
version, if $X$ is a closed subvariety of projective space and $H$ is
a general hyperplane, one uses linear systems (seen now as a precursor
of sheaf cohomology) to get information about $X$ from information
about the intersection of $X$ with $H$ plus other geometrical or
numerical assumptions on $X$.

From the computational and commutative algebra point of view, the
regularity is one of the most important invariants of a finitely
generated graded module over a polynomial ring. Roughly, it measures
the amount of computational resources that working with that module
requires. More precisely the regularity of a module bounds the largest
degree of the minimal generators and the degree of syzygies.

Extensions of this notion have been proposed over the years to handle
other ambient varieties instead of projective space: Grassmannians \cite{ArrondoMalaspina},
quadrics \cite{BallicoMalaspina1}, multiprojective spaces \cite{BallicoMalaspina2,CostaMiroRoig1}, $n$-dimensional smooth
projective varieties with an $n$-block collection~\cite{CostaMiroRoig1},
and abelian varieties \cite{PopescuPopa}.

In all these cases the ambient variety is smooth. Maclagan and Smith
\cite{MaclaganSmith} gave a variant of multigraded Castelnuovo-Mumford
regularity, motivated by toric geometry, which applies to some
singular varieties: for a different approach to multigraded
regularity, see \cite{SidmanVanTuyl}.

Since it often happens that a variety can be conveniently embedded in
a weighted projective space but embedding it in projective space
requires some arbitrary choices, or the use of many variables or high
degree equations, it is worthwhile to be able to import these ideas
into weighted projective spaces.

The first aim of this project is to introduce and study a notion of
regularity, and a related notion of globally generated sheaf, using
Koszul complexes, for weighted projective spaces. The theory of
\cite{MaclaganSmith} applies to weighted projective spaces, but as theirs is a
general theory for all toric varieties, we believe that it should be
possible to do better in this narrower context. In particular we want
the structure sheaf to be regular, which in general does not happen in
\cite{MaclaganSmith}. Specifically, the definitions in \cite{MaclaganSmith}, applied to
weighted projective spaces, take no account of the individual weights, and
the results are therefore only those that hold for all weighted
projective spaces (and more), irrespective of the weights.

\section{Generalities}\label{sect:general}
Fix a weighted projective space $\PP=\PP(\w_0,\dots,\w_n)$, which we
always write with the weights in decreasing order, $\w_0\geq\dots\geq
\w_n$.  There is a natural quotient map $\pi\colon\PP^n\to\PP$ (see
\cite[Thm.~3A.1]{BeltramettiRobbiano}).

We want to
follow~\cite{Canonaco} and regard $\PP$ as a graded scheme: see
Remark~\ref{rk:graded}. An alternative would be to
regard $\PP$ as a stack, as in
\cite[Example~7.27]{FantechiMannNironi}.
If we were
interested only in schemes (or varieties), we could assume that the
weights $(\w_0,\dots,\w_n)$ are reduced, i.e.\ no prime divides $n$ of
them, because every weighted projective space is isomorphic as a
scheme to a weighted projective space with reduced weights (see
\cite[Prop.~3C.5]{BeltramettiRobbiano}). If we were interested in
orbifolds we could similarly assume that $\hcf(w_0,\ldots,w_n)=1$.
However, the coordinate hyperplanes $\HH_j$ (see Lemma~\ref{lm:restr}
below) do not inherit these conditions, so we must continue to allow
arbitrary weights.

For a subset $I=\{\nu_1,\dots, \nu_s\}\subset \{0,\dots ,n\} $ with
$0\leq \nu_1< \dots< \nu_s\leq n$, we set $|\w_I|=\sum_{\nu\in I}
\w_{\nu}$. For convenience, we also write $\bdw_i$ for the sum of the
$i+1$ largest weights, $\bdw_i=\w_0+\ldots+\w_i=|\w_{0,\ldots,i}|$, and
we write $\bdw$ for the total weight, $\bdw=\bdw_n$.

As a $K$-scheme, $\PP(w_0,\ldots,w_n)=\Proj K[x_0,\ldots,x_n]$ with
grading given by $\deg x_i=w_i$. Accordingly, we can define the
twisting sheaf $\cO_\PP(1)$ and twists $\cE(t)$ for any coherent sheaf
$\cE$ and $t\in\ZZ$: see \cite{BeltramettiRobbiano}. In particular we
have $\omega_\PP=\cO_\PP(-\bdw)$.

\begin{remark}\label{rk:graded}
  We consider $\PP$ as a graded scheme, and the coherent sheaves are,
  from now on, to be understood as graded sheaves. The reason for this
  choice is that by working with graded sheaves on $\PP$ one avoids
  all the pathologies of $\PP$, because the sheaves of the form
  $\cO_{\PP}(j)$ are always invertible: see \cite[Chapter~2,
    p.~30]{Canonaco}.
\end{remark}

With this convention, for $\cE$ a coherent sheaf on $\PP$, we
have $\cE\otimes\cO_{\PP}(j)\cong \cE(j)$ for any integer $j$, so we
define the modules
\begin{equation}\label{eq:cohomologymodule}
H^i(\cE(\geq l))=\bigoplus_{t\geq l}H^i(\PP,\cE(t))
\end{equation}
and similarly $H^i(\cE(\leq l))$, for $l\in\ZZ$. We also use the
notation
\begin{equation}\label{eq:Histar}
  \Hst^i(\PP,\cE)=\bigoplus_{t\in\ZZ}H^i(\PP,\cE(t))=H^i(\cE(>-\infty)).
\end{equation}

Observe that $\pi_*\cO_{\PP^n}$ is a split vector bundle on $\PP$ by
\cite[Cor. 3A.2]{BeltramettiRobbiano}. More precisely
\begin{equation}\label{eq:pi*O}
\pi_*\cO_{\PP^n}\imic\bigoplus_{(r_0,\ldots,r_n)}\cO_\PP(-\bdr).
\end{equation}
where the sum is over all $(r_0,\ldots,r_n)$ such that $0\leq r_j<
\w_j$ for all $j=0,\dots, n$, and $\bdr=\sum_{j=0}^n r_j$.
\begin{lemma}\label{lm:vanishing}
For any $i\in\NN$, if $\cE$ is a vector bundle on $\PP$ then
$\Hst^i(\PP,\cE)=0$ if and only if $\Hst^i(\PP^n,\pi^*\cE)=0$.
\end{lemma}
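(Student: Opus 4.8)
The plan is to exploit that $\pi\colon\PP^n\to\PP$ is a \emph{finite} morphism (it is the quotient by a finite group, equivalently the map attached to the finite degree-preserving graded extension $x_j\mapsto y_j^{\w_j}$) and to transport the entire computation down to $\PP$ using the decomposition~\eqref{eq:pi*O}. Since $\pi$ is finite it is affine, so $R^j\pi_*=0$ for $j>0$ and the Leray spectral sequence collapses to give, for every coherent sheaf $\cF$ on $\PP^n$,
\begin{equation*}
H^i(\PP^n,\cF)\imic H^i(\PP,\pi_*\cF).
\end{equation*}
Applying this with $\cF=(\pi^*\cE)(t)$ for each $t\in\ZZ$ will express $\Hst^i(\PP^n,\pi^*\cE)$ through cohomology on $\PP$.

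First I would record the compatibility of twists, namely $\pi^*\cO_\PP(t)\imic\cO_{\PP^n}(t)$ for all $t$, which in the graded-scheme sense of Remark~\ref{rk:graded} follows from the fact that $\pi$ is induced by a graded ring map of degree $0$. Consequently $(\pi^*\cE)(t)\imic\pi^*(\cE(t))$, so the twisted sheaf stays locally free and the projection formula applies. Using the projection formula with $\cF=\cO_{\PP^n}$ together with~\eqref{eq:pi*O} then gives
\begin{equation*}
\pi_*\bigl(\pi^*(\cE(t))\bigr)\imic \cE(t)\otimes\pi_*\cO_{\PP^n}\imic\bigoplus_{(r_0,\ldots,r_n)}\cE(t-\bdr),
\end{equation*}
the sum being over the same finite set $0\leq r_j<\w_j$ as in~\eqref{eq:pi*O}. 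Since this direct sum is finite, it commutes with $H^i(\PP,-)$.

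Assembling these steps and summing over $t$, I would obtain
\begin{equation*}
\Hst^i(\PP^n,\pi^*\cE)=\bigoplus_{t\in\ZZ}H^i\bigl(\PP,\pi_*\pi^*(\cE(t))\bigr)\imic\bigoplus_{(r_0,\ldots,r_n)}\ \bigoplus_{t\in\ZZ}H^i(\PP,\cE(t-\bdr)).
\end{equation*}
For each fixed tuple $(r_0,\ldots,r_n)$ the inner sum is, after reindexing $s=t-\bdr$, exactly $\Hst^i(\PP,\cE)$; hence $\Hst^i(\PP^n,\pi^*\cE)$ is isomorphic to a direct sum of $\prod_{j=0}^n\w_j$ copies of $\Hst^i(\PP,\cE)$. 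As this number of copies is finite and nonzero, the graded module on the left vanishes if and only if $\Hst^i(\PP,\cE)$ does, which proves both implications simultaneously.

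The main obstacle I anticipate is the bookkeeping in the graded-scheme framework: one must verify carefully that each $\cO_\PP(t)$ is invertible and that $\pi^*$ commutes with twisting and satisfies the projection formula in Canonaco's graded setting, rather than merely on the underlying (possibly badly singular) scheme. Once that compatibility is secured, the remaining ingredients — finiteness of $\pi$, the collapse of Leray, and the reindexing of a finite, nonempty direct sum — are routine.
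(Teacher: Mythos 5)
Your proof is correct and follows essentially the same route as the paper: finiteness of $\pi$ to identify $H^i(\PP^n,-)$ with $H^i(\PP,\pi_*(-))$, then the projection formula combined with the splitting~\eqref{eq:pi*O} to write $\pi_*\pi^*\cE$ as a finite direct sum of twists $\cE(-\bdr)$, whence $\Hst^i(\PP^n,\pi^*\cE)$ is a finite nonzero number of copies of $\Hst^i(\PP,\cE)$. The extra care you take over twist compatibility and the reindexing over $t$ only makes explicit what the paper leaves implicit.
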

\begin{proof}
Since $\pi$ is a finite morphism, we have
$\Hst^i(\PP^n,\pi^*\cE)\imic \Hst^i(\PP,\pi_*\pi^*\cE)$, and it is enough
to observe that (using the projection formula and \eqref{eq:pi*O})
\[
\pi_*\pi^*\cE\imic\cE\otimes\pi_*\cO_{\PP^n}\imic
\cE\otimes\bigg(\bigoplus_{(r_0,\ldots,r_n)}\cO_\PP(-\bdr)\bigg)\imic
\bigoplus_{(r_0,\ldots,r_n)}\cE(-\bdr),
\]
with notation as in~\eqref{eq:pi*O}.
\end{proof}

\section{Weighted Castelnuovo-Mumford Regularity}\label{sect:wregular}

We begin by recalling the usual definition of Castelnuovo-Mumford
regularity on projective space: see, for example,
\cite[Chapter~1.8]{Lazarsfeld}.
\begin{definition}\label{df:regular}
A coherent sheaf $\cF$ on $\PP^n$ is said to be \emph{$m$-regular}, for
$m\in\ZZ$, if
\[
H^i(\cF(m-i))=0
\]
for $i=1,\ldots,n$.
\end{definition}
It is well known (see \cite[Theorem~1.8.3]{Lazarsfeld}) that being
$m$-regular implies, in particular, that $H^0(\cF(m+1))\neq 0$, and in
fact much more than that: it is globally generated (and this even
holds for $\cF(m)$).

Maclagan and Smith in \cite{MaclaganSmith} gave a definition of
regularity for simplicial toric varieties. We refer to it as toric
regularity. On $\PP$ it reduces to the following
\begin{definition}\label{df:toricregular}
Let $\PP=\PP(\w_0,\dots,\w_n)$ and $k=\lcm(\w_0,\dots,\w_n)$.
A coherent sheaf $\cF$ on $\PP$ is said to be \emph{$m$-toric regular}
if, for $i=1,\dots ,n$
\[
H^i(\cF(m-ik))=0.
\]
\end{definition}
Here we have taken $\cC=\{\cO(k)\}$ in \cite[Definition~6.2]{MaclaganSmith}:
according to the definition of $\cC[i]$ given in
\cite[Section~4]{MaclaganSmith}, $\cF$ is toric $m$-regular if
$H^i(\cF(p))=0$ for all $i>0$ and for every $p\in m+(-i\cO(k)+\cC)$. That is,
$H^i(\cF(m-ik+t))=0$ for all $i>0$ and for every $t\in k\NN$, but it is enough to
consider $t=0$.

In our more restricted context, we want a definition that takes
account of the individual weights, which toric regularity does
not.

Our motivation for the definition we make comes from the Koszul complex.
On $\PP$ (with, as usual, $\w_0\geq\dots\geq \w_n$) we define
$\cA_j=\bigoplus\limits_{|I|=j+1}\cO(-|\w_I|)$. Then (see \cite[Lemma~2.1.3]{Canonaco}) there is a
Koszul complex on $\PP$ given by
\begin{equation}\label{eq:koszul}
0\To \cA_n\To \cA_{n-1}\To\dots\To \cA_0\To\cO\To 0.
\end{equation}
For example, if $\PP=\PP(5,3,2)$ then the Koszul complex is
\[
0\to \cO(-10)\to
\begin{array}{c}
\cO(-8)\\
\oplus\\
\cO(-7)\\
\oplus\\
\cO(-5)
\end{array}
\to
\begin{array}{c}
\cO(-5)\\
\oplus\\
\cO(-3)\\
\oplus\\
\cO(-2)
\end{array}
\to \cO\to 0.
\]

We give the following definition of weighted Castelnuovo-Mumford
regularity.
\begin{definition}\label{df:wregular}
Let $\PP=\PP(\w_0,\dots,\w_n)$ with $\w_0\geq\dots\geq \w_n$ and
$k=\lcm(\w_0,\dots,\w_n)$.  A coherent sheaf $\cF$ on $\PP$ is said to
be \emph{$m$-weighted regular}, which we abbreviate to
\emph{$m$-wregular}, if for $i=1,\dots ,n$
\[
H^i(\cF(t+(m+1)k-\bdw_i))=0
\]
for every $t\geq 0$, and also
\[
H^0(\cF((m+1)k))\neq 0.
\]
We often write \emph{wregular} to mean $0$-wregular.\\
We define the \emph{wregularity} of $\cF$, $\wreg(\cF)$, as the smallest
integer $m$ such that $\cF$ is $m$-wregular.
\end{definition}

\begin{remark}\label{rk:weight1}
For $\PP=\PP^n$, wregularity and toric regularity both coincide with
the usual notion of Castelnuovo-Mumford regularity.
\end{remark}
Indeed, in this case we have $m=0$ and $\w_0=\dots=\w_n=1$, so $k=1$
and $\w_i=i+1$, so taking $t=0$ in Definition~\ref{df:wregular} we get
\[
H^i(\cF(k-\bdw_i))= H^i(\cF(-i)).
\]

\begin{lemma}\label{lm:Owregular}
For $\PP$ any weighted projective space, $\wreg(\cO_\PP)=0$.
\end{lemma}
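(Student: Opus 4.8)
The plan is to reduce everything to the cohomology of the line bundles $\cO_\PP(d)$, and then to verify the two halves of Definition~\ref{df:wregular} separately: that $\cO_\PP$ is $0$-wregular, and that it is not $m$-wregular for any $m<0$, so that the wregularity is exactly $0$.

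First I would assemble the cohomological input on $\cO_\PP(d)$. For the intermediate range $0<i<n$ I would feed $\cE=\cO_\PP$ into Lemma~\ref{lm:vanishing}: since $\Hst^i(\PP^n,\cO_{\PP^n})=0$ for $0<i<n$, the lemma gives $\Hst^i(\PP,\cO_\PP)=0$, hence $H^i(\PP,\cO_\PP(d))=0$ for every twist $d$. For the two extreme values of $i$ I would use the explicit description of $H^0$, namely $H^0(\PP,\cO_\PP(d))=0$ for $d<0$ while $H^0(\PP,\cO_\PP(d))\neq0$ precisely when $d$ is a non-negative integer combination of the weights, together with Serre duality on the Cohen--Macaulay variety $\PP$. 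Using $\omega_\PP=\cO_\PP(-\bdw)$ this reads $H^n(\PP,\cO_\PP(d))^\vee\cong H^0(\PP,\cO_\PP(-d-\bdw))$.

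With these facts in hand the $0$-wregularity is quick. The conditions for $1\le i\le n-1$ hold for every twist by the intermediate vanishing above, so only $i=n$ is at issue; since $\bdw_n=\bdw$, Serre duality turns the required $H^n(\cO_\PP(t+k-\bdw))$ into $H^0(\cO_\PP(-t-k))^\vee$, which vanishes for all $t\ge 0$ because $-t-k\le-k<0$. The remaining clause $H^0(\cO_\PP(k))\neq0$ holds because $k=\lcm(\w_0,\dots,\w_n)$ is a multiple of every weight, so the monomials $x_j^{k/\w_j}$ furnish sections.

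For minimality I would split into two cases. If $m\le-2$ then $(m+1)k<0$, so $H^0(\cO_\PP((m+1)k))=0$ and the nonvanishing clause of Definition~\ref{df:wregular} already fails. If $m=-1$ then $(m+1)k=0$, and taking $i=n$, $t=0$ the vanishing clause requires $H^n(\cO_\PP(-\bdw))=0$; but Serre duality gives $H^n(\cO_\PP(-\bdw))^\vee\cong H^0(\cO_\PP(0))\cong K\neq0$, so this fails too. Hence $\cO_\PP$ is $m$-wregular for no $m<0$, and combined with the previous paragraph this yields $\wreg(\cO_\PP)=0$. The one genuinely external ingredient, and the step I would be most careful about, is Serre duality for $\cO_\PP(d)$ in the graded-scheme setting; I would justify it from the dualizing sheaf $\omega_\PP=\cO_\PP(-\bdw)$ recorded above (citing \cite{BeltramettiRobbiano} or \cite{Canonaco}), the remaining manipulations being formal.
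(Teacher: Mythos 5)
Your proposal is correct and follows essentially the same route as the paper: vanishing of the intermediate cohomology of $\cO_\PP(d)$, Serre duality with $\omega_\PP=\cO_\PP(-\bdw)$ to handle $i=n$, the sections $x_j^{k/\w_j}$ for $H^0(\cO_\PP(k))\neq 0$, and the nonvanishing of $H^n(\cO_\PP(-\bdw))\cong H^0(\cO_\PP)^\vee$ to rule out $m=-1$. You are somewhat more explicit than the paper (deriving the intermediate vanishing from Lemma~\ref{lm:vanishing}, checking all $t\ge 0$ rather than just $t=0$, and treating $m\le -2$ separately), but these are refinements of the same argument rather than a different one.
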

\begin{proof}
In fact for any $t\in\ZZ$ we have $H^i(\cO_\PP(t))=0$ for $0<i<n$, and
$H^0(\cO_\PP(k))\neq 0$.
For $0$-wregularity we also need $H^n(\cO(k-\bdw))=0$, but this holds
because $H^n(\cO(k-\bdw))$ is Serre dual to $H^0(\cO(-k))$, which is
zero. (See \cite[Section~6B]{BeltramettiRobbiano} and \cite[Proposition~2.1.4]{Canonaco} for
Serre duality in this context.)

However, $\cO$ is not $-1$-wregular because
$H^n(\cO(-\bdw))\cong H^0(\cO)\neq 0$.
\end{proof}

On the other hand we cannot expect $\cO_\PP$ to be toric regular for
arbitrary weights. In fact $H^n(\cO(-nk))\cong H^0(\cO(nk-\bdw))$ which
is non-zero in general. However, $\cO(nk)$ is always toric regular.

A significant difference between Definition~\ref{df:wregular} and
Definitions~\ref{df:regular} and \ref{df:toricregular} is that we have
imposed a non-vanishing condition, because we lack a counterpart to
Mumford's theorem~\cite[Theorem~1.8.3]{Lazarsfeld}: see
Example~\ref{ex:nonvanish} below. With this in mind, we make the
following definition.

\begin{definition}\label{df:semiwregular}
A coherent sheaf $\cF$ on $\PP$ is said to
be \emph{$m$-semiwregular} if for $i=1,\dots ,n$
\[
H^i(\cF(t+(m+1)k-\bdw_i))=0
\]
for every $t\geq 0$.
\end{definition}

\begin{example}\label{ex:nonvanish}
If $\PP=\PP(3,2)$ then $\cO_\PP(-5)$ is $0$-semiwregular but not
$0$-wregular, whereas $\cO_\PP(-4)$ is $0$-wregular.
\end{example}
In fact, $m=0$ and $k=6$, so $H^1(\cO(-5+6-5))\cong H^0(\cO(-1))=0$,
and thus if we take $\cF=\cO(-5)$ then the condition
$H^1(\cO(-5)\otimes\cO(t+6-(3+2)))=0$ is satisfied for every $t\geq
0$, but $H^0(\cO(-5)\otimes\cO(k))=H^0(\cO(1))=0$.

On the other hand, for $\cO(-4)$ the condition
$H^1(\cO(-4)\otimes\cO(t+6-(3+2)))=0$ is satisfied for every $t\geq 0$,
but $H^0(\cO(-4)\otimes\cO(k))=H^0(\cO(2))\neq 0$. So $\cF(1)=\cO(-4)$
is wregular.
\smallskip

Now we show how weighted regularity and weighted semiregularity behave under
pullback along the natural covering map from $\PP^n$.

\begin{lemma}\label{lm:pbwreg}
Let $\cF$ be an $m$-semiwregular (or $m$-wregular) coherent sheaf on
$\PP$.  Then $\pi^*\cF$ is $((m+1)k-n+\bdw-\bdw_1)$-regular on $\PP^n$.
\end{lemma}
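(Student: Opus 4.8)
The plan is to deduce the vanishings defining $M$-regularity of $\pi^*\cF$ on $\PP^n$, with $M=(m+1)k-n+\bdw-\bdw_1$, directly from the vanishings defining $m$-semiwregularity of $\cF$ on $\PP$, proceeding exactly as in Lemma~\ref{lm:vanishing}. Observe first that regularity on $\PP^n$ (Definition~\ref{df:regular}) is a purely cohomological vanishing condition, so only the semiwregular hypothesis is used; the additional non-vanishing in the wregular case is irrelevant to the conclusion, and the coherent (rather than locally free) case is no obstruction since $\pi_*\cO_{\PP^n}$ is locally free, so the projection formula applies verbatim. The key input is that pullback carries twists to twists, $\pi^*\cO_\PP(j)\imic\cO_{\PP^n}(j)$ for all $j$; granting this, $\pi^*\cF(j)\imic\pi^*(\cF(j))$, and because $\pi$ is finite (so its higher direct images vanish), the projection formula and \eqref{eq:pi*O} give, for every $j\in\ZZ$ and every $i$,
\[
H^i(\PP^n,\pi^*\cF(j))\imic H^i(\PP,\cF(j)\otimes\pi_*\cO_{\PP^n})\imic\bigoplus_{(r_0,\ldots,r_n)}H^i(\PP,\cF(j-\bdr)),
\]
the sum running over $0\leq r_j<\w_j$ with $\bdr=\sum_{j=0}^n r_j$.

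Next I would specialise to $j=M-i$ and show that every summand vanishes for $i=1,\ldots,n$. As $(r_0,\ldots,r_n)$ ranges over the admissible tuples, the value $\bdr$ sweeps a subset of $\{0,1,\ldots,\bdw-n-1\}$, both extremes being attained (by $r_j\equiv 0$ and by $r_j\equiv\w_j-1$). For fixed $i$ and fixed admissible $\bdr$ I would match the summand $H^i(\PP,\cF(M-i-\bdr))$ against Definition~\ref{df:semiwregular} by writing $M-i-\bdr=t+(m+1)k-\bdw_i$, that is
\[
t=\bdw_i-\bdw_1-i+\bdw-n-\bdr,
\]
whereupon semiwregularity forces the group to vanish as soon as $t\geq 0$.

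It then remains to verify $t\geq 0$ for all admissible $\bdr$ and all $i\in\{1,\ldots,n\}$. Since $\bdr\leq\bdw-n-1$, we get $t\geq\bdw_i-\bdw_1-i+1$, and as $\bdw_i-\bdw_1=\w_2+\cdots+\w_i$ is a sum of $i-1$ of the (positive integer) weights, we have $\bdw_i-\bdw_1\geq i-1$, whence $t\geq 0$; the case $i=1$ is the degenerate empty sum giving $t\geq 0$ directly. This computation also shows that $M$ is forced to be exactly this value: equality $\bdw_i-\bdw_1=i-1$ occurs (e.g.\ when all weights are equal, or at $i=1$), and the maximal shift $\bdr=\bdw-n-1$ genuinely appears in \eqref{eq:pi*O}, so no smaller constant works in general.

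Conceptually the argument is a routine reduction once the display in the first paragraph is available, so the step needing real care is the twist-compatibility $\pi^*\cO_\PP(j)\imic\cO_{\PP^n}(j)$: this depends on the graded-scheme viewpoint of Remark~\ref{rk:graded}, under which the sheaves $\cO_\PP(j)$ are invertible and pull back correctly, rather than on the bare scheme structure of $\PP$. The only other delicate point is the elementary bookkeeping that the multiset of shifts $\bdr$ occurring in \eqref{eq:pi*O} attains its maximum $\bdw-n-1$, since it is precisely this extreme value, combined with the estimate $\bdw_i-\bdw_1\geq i-1$, that pins down the regularity bound.
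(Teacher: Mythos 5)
Your argument is correct and is essentially the paper's own proof: both reduce via finiteness of $\pi$, the projection formula and \eqref{eq:pi*O} to the vanishing of $H^i(\PP,\cF(M-i-\bdr))$, and both close the estimate with $\bdw_i-\bdw_1\geq i-1$ (the paper checks only the minimal twist $\bdr=\bdw-n-1$, which suffices since semiwregularity is stated for all $t\geq 0$, while you check every summand — the same computation). One small caveat: your closing aside that ``no smaller constant works in general'' is not actually established, since tightness of the inequality $t\geq 0$ does not show that the relevant cohomology group is nonzero for a smaller twist; but this is not part of the statement being proved.
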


\begin{proof}
We want to show that, for $q=(m+1)k-n+\bdw-\bdw_1$ and for any
$i=1,\dots,n$, we have
\[
h^i(\PP^n, \pi^* \cF(q-i))=0.
\]
By~\eqref{eq:pi*O}, $\pi_*\pi^* \cF\cong \bigoplus_{(r_0,\ldots,r_n)}\cF(-\bdr)$,
where $0\leq r_j< \w_j$ for all $j=0,\dots, n$.  The smallest twist that occurs is $-\bdw+n+1$ (when $r_j=w_j-1$ for every $j$). Since
\[
H^i(\PP^n,\pi^* \cF(q-i))\imic H^i(\PP,\pi_*\pi^*\cF(q-i))
\]
it is enough to show that $H^i(\PP,\cF(q-i-\bdw+n+1))$ vanishes, for each
$i$, with $q$ as above.

If $i=1$ and $q=(m+1)k-n+\bdw-\bdw_1$ we get
\[
H^1(\PP,\cF(q-1-\bdw+n+1))=H^1(\cF((m+1)k-\bdw_1))
\]
which is zero because $\cF$ is $m$-wregular. Hence
\[
H^1(\PP,\pi_*\pi^*\cF((m+1)k-n+\bdw-\bdw_1-1))=0.
\]

For $2\le i\le n$ we have
\[
\Hst^i(\PP,\cF(t+(m+1)k-\bdw_i))=0
\]
for every $t\ge 0$, by $m$-wregularity. However, $\bdw_i \ge \bdw_1+i-1$,
since the weights are positive integers, so
\begin{align*}
q-i-\bdw+n+1 &= (m+1)k-n+\bdw-\bdw_1-i-\bdw+n+1\\
&= (m+1)k-\bdw_1-i+1\\
&\ge m(k+1)-\bdw_i
\end{align*}
and hence $H^i(\PP,\cF(q-i-\bdw+n+1))=0$, as required.
\end{proof}

Next we show how weighted semiregularity behaves under restriction to
coordinate hyperplanes. Weighted regularity does not behave well, in
general, as Example~\ref{ex:badwreg} illustrates.

\begin{lemma}\label{lm:restr}
Suppose that $\cF$ is an $m$-semiwregular coherent sheaf on $\PP$ and
let $\HH_j=\{x_j=0\}$ be the $j$-th coordinate hyperplane. We put
$k_j=\lcm(\w_0,\dots , \w_{j-1}, \w_{j+1},\dots , \w_n)$ and
$z_j=k/k_j$. Then $\cF_{\HH_j}=\cF\otimes \cI_{\HH_j}$ is
$((m+1)z_j-1)$-semiwregular on $\HH_j$.
\end{lemma}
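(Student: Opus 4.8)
The plan is to derive the semiwregularity of the restriction directly from that of $\cF$ using the standard restriction sequence on $\PP$, reading off the required vanishing on $\HH_j$ from the associated long exact sequence in cohomology. First I would identify $\HH_j$ with the weighted projective space $\PP(\w_0,\ldots,\w_{j-1},\w_{j+1},\ldots,\w_n)$, whose weights are automatically in decreasing order because the original weights are. Writing $\bdw'_i$ for the sum of the $i+1$ largest weights of $\HH_j$, one has $\bdw'_i=\bdw_i$ for $i<j$ and $\bdw'_i=\bdw_{i+1}-\w_j$ for $i\ge j$. Since the target index $m'=(m+1)z_j-1$ satisfies $(m'+1)k_j=(m+1)z_jk_j=(m+1)k$, the claim that $\cF_{\HH_j}$ is $m'$-semiwregular translates into
\[
H^i\bigl(\HH_j,\cF_{\HH_j}(t+(m+1)k-\bdw'_i)\bigr)=0\qquad(1\le i\le n-1,\ t\ge 0).
\]

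The key tool is the restriction sequence $0\To\cO_\PP(-\w_j)\To\cO_\PP\To\cO_{\HH_j}\To 0$, the ideal sheaf of $\HH_j$ being $\cO_\PP(-\w_j)$ because $x_j$ has degree $\w_j$. Tensoring with $\cF(d)$ and taking cohomology yields the three-term exact sequence
\[
H^i(\PP,\cF(d))\To H^i(\HH_j,\cF_{\HH_j}(d))\To H^{i+1}(\PP,\cF(d-\w_j)),
\]
so it suffices to prove that both outer groups vanish when $d=t+(m+1)k-\bdw'_i$. I would then split into the two cases $i<j$ and $i\ge j$. For $i<j$ we have $d=t+(m+1)k-\bdw_i$, so the left-hand group vanishes at once by $m$-semiwregularity in index $i$; the right-hand group, after using $\bdw_{i+1}=\bdw_i+\w_{i+1}$, becomes $H^{i+1}\bigl(\cF((t+\w_{i+1}-\w_j)+(m+1)k-\bdw_{i+1})\bigr)$, which vanishes by index $i+1$ provided the shift $t+\w_{i+1}-\w_j$ is nonnegative. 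For $i\ge j$ we have $d=t+(m+1)k-\bdw_{i+1}+\w_j$, so the right-hand group is exactly $H^{i+1}\bigl(\cF(t+(m+1)k-\bdw_{i+1})\bigr)$, vanishing by index $i+1$, while the left-hand group rewrites as $H^i\bigl(\cF((t+\w_j-\w_{i+1})+(m+1)k-\bdw_i)\bigr)$, vanishing by index $i$ provided $t+\w_j-\w_{i+1}\ge 0$.

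The one point that genuinely needs the hypotheses is the nonnegativity of these shifts, and this is precisely where $\w_0\ge\cdots\ge\w_n$ enters: for $i<j$ we have $i+1\le j$, hence $\w_{i+1}\ge\w_j$; for $i\ge j$ we have $i+1\ge j+1$, hence $\w_{i+1}\le\w_j$; in each case the deleted weight $\w_j$ is compared with $\w_{i+1}$ and the monotonicity of the weights supplies the correct sign. I do not expect any further difficulty: the content is entirely the bookkeeping of twists, with the case split at $i=j$ being the natural seam where the two formulas for $\bdw'_i$ meet. The only technical caveat is that tensoring the restriction sequence by $\cF$ is exact on the left when $\cF$ is locally free; for a general coherent $\cF$ one need only retain the surjection onto $\cF_{\HH_j}$ and the three-term exact sequence above, so a possible $\Tor$ term (supported on $\HH_j$) does not affect the argument.
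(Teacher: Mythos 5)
Your proof follows the paper's argument exactly: the same identification of $\HH_j$ with $\PP(\w_0,\dots,\w_{j-1},\w_{j+1},\dots,\w_n)$, the same restriction sequence $0\to\cF(-\w_j)\to\cF\to\cF_{\HH_j}\to 0$ twisted by the appropriate degree, the same case split at $i=j$, and the same use of the monotonicity $\w_0\ge\dots\ge\w_n$ to make the shifts $t\pm(\w_{i+1}-\w_j)$ nonnegative so that $m$-semiwregularity of $\cF$ in indices $i$ and $i+1$ applies. The only difference is your closing caveat about a possible $\Tor$ term when $\cF$ is not locally free, a point the paper passes over in silence (and which your one-line dismissal does not fully resolve either, since killing $H^{i+1}$ of the kernel of $\cF\to\cF_{\HH_j}$ still requires comparing it with $\cF(-\w_j)$); but this does not separate your argument from the paper's.
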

\begin{proof}
We consider $\HH_j\imic \PP(\w_0,\dots , \w_{j-1}, \w_{j+1},\dots ,
\w_n)$: note that the sum of the first $i+1$ of these weights is
$\bdw_i$ if $i<j$ and is $\bdw_{i+1}-\w_j$ if $i\ge j$. Thus, writing
$\cE=\cF_{\HH_j}$, we want to show that, for $q=(m+1)z_j-1$ and for
any $i=1,\dots , n-1$
\begin{align}
H^i(\HH_j, \cE(t+(q+1)k_j-\bdw_i))&= 0 && \text{if $i<j$}\label{eq:wregrestricted<}\\
H^i(\HH_j, \cE(t+(q+1)k_j-(\bdw_{i+1}-\w_j)))&= 0 && \text{if
  $i\ge j$},\label{eq:wregrestricted>}
\end{align}
for every integer $t\geq 0$.

Let us consider the exact sequence
\begin{equation}\label{eq:FEseq}
0\To\cF(-\w_j)\To\cF\To\cE\To 0,
\end{equation}
coming from tensoring $\cF$ with the restriction sequence
\[
0\To\cO_\PP(-\w_j)\To\cO_\PP\To\cO_{\HH_j}\To 0.
\]
If $i<j$ we twist \eqref{eq:FEseq} by $t+(m+1)k-\bdw_i$: in cohomology,
this gives (for $0<i<n$ and $t\ge 0$)
\begin{multline}\label{eq:long_i<j}
H^i(\PP,\cF(t+(m+1)k-\bdw_i))\To H^i(\HH_j,\cE(t+(m+1)k-\bdw_i))\\
\To H^{i+1}(\PP,\cF(t+(m+1)k-\bdw_i-w_j)).
\end{multline}
The first of these terms vanishes because $\cF$ is $m$-wregular, and
the $m$-wregularity also gives $H^{i+1}(\PP,\cF(t'+(m+1)k-\bdw_{i+1})=0$ for any
$t'\ge 0$. In particular, since $i+1\le j$ we have $\w_{i+1}\ge \w_j$
and we may take $t'=t+\w_{i+1}-\w_j$, giving us vanishing of the third
term in \eqref{eq:long_i<j}. Thus the middle term also vanishes, and
since $(q+1)k_j=(m+1)k$ that proves~\eqref{eq:wregrestricted<}.

The proof for the second case, $i\ge j$, is similar. This time we
twist \eqref{eq:FEseq} by $t+(m+1)k-(\bdw_{i+1}-\w_j)$. In cohomology
this gives
\begin{multline}\label{eq:long_i>j}
H^i(\PP, \cF(t+(m+1)k-(\bdw_{i+1}-\w_j))\To H^i(\HH_j,
\cE(t+(m+1)k-(\bdw_{i+1}-\w_j)))\\
\To H^{i+1}(\PP, \cF(t+(m+1)k-\bdw_{i+1})).
\end{multline}
The third of these terms vanishes because $\cF$ is $m$-wregular, and
the $m$-wregularity also gives $H^i(\PP,\cF(t'+(m+1)k-\bdw_i)=0$ for
any $t'>0$. Now since $i+1\ge j$ we have $\w_{i+1}\le \w_j$ and we may
take $t'=t-\w_{i+1}+\w_j$, giving us vanishing of the first term in
\eqref{eq:long_i>j}. Again, the middle term also vanishes and this
proves~\eqref{eq:wregrestricted>}.
\end{proof}

\begin{example}\label{ex:badwreg}
If $\PP=\PP(3,2,1)$, then $z_2=1$, it is easy to check that $\cF=\cO_\PP(-5)$ is
$0$-wregular. In fact, $k=6$, so $H^2(\cO(-5+6-6))\cong H^0(\cO(-1))=0$,
and thus if we take $\cF=\cO(-5)$ then the condition
$H^2(\cO(-5)\otimes\cO(t+6-(3+2+1)))=0$ is satisfied for every $t\geq
0$, and $H^0(\cO(-5)\otimes\cO(k))=H^0(\cO(1))\not=0$.
However $\cF_{\HH_2}$ is not $0$-wregular by
Example~\ref{ex:nonvanish}.
\end{example}

\begin{remark}\label{rk:restrictweight1}
Let $\cF$ be an $m$-wregular coherent sheaf on $\PP$. If
$\w_0=\dots=\w_n=1$, then $(m+1)k-n+\bdw_{n-2}=m$, and $\pi^* \cF$ is
$m$-regular. More generally, if $\w_j=1$, then $z_j(m+1)-1=m$ and
$\cF_{\HH_j}$ is $m$-semiwregular on~$\HH_j$.
\end{remark}
We cannot expect the above properties for toric regularity.
\smallskip

We can give a notion of global generation adapted to this weighted
situation.

\begin{definition}\label{df:wgg}
A  coherent sheaf $\cF$ on $\PP$ is said to be \emph{weighted globally
  generated} (abbreviated to \emph{wgg}) if, for any $x\in \PP$, the
map
\[
\mu\colon \bigoplus_{j=0}^n H^0(\cF(k-\w_j))\otimes\cO_x\to \cF_x(k),
\]
where $\mu(\sum_{j=0}^n f_j\otimes e_x)=\sum_{j=0}^n f_jx_je_x$, is
surjective.
\end{definition}
This reduces to the usual definition of globally generated in the case
of $\PP^n$, when $\w_0=\dots=\w_n=1$ and $k=1$, so $\bigoplus_{j=0}^n
H^0(\cF(k-\w_j))\cong H^0(\cF)\otimes H^0(\cO(1))$. In fact we have a
surjection
\[
\mu_x\colon H^0(\cF)\otimes H^0(\cO(1))\otimes\cO_x\to \cF_x(1)
\]
and a surjection
\[
H^0(\cF)\otimes H^0(\cO(1))\otimes\cO_x\to H^0(\cF)\otimes \cO_x(1).
\]
So we may construct a surjection
\[
H^0(\cF)\otimes \cO_x(1)\to \cF_x(1).
\]
Finally we may identify $\cO_{\PP}(1)$ with $\cO_{\PP}$ at $x$ and $\cF(1)$ with $\cF$ at $x$. So we may conclude that $\cF$ is globally generated.
\begin{proposition}\label{pr:Owgg}
$\cO_\PP$ is wgg.
\end{proposition}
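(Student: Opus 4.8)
The plan is to check surjectivity of $\mu$ one point at a time, using the fact (Remark~\ref{rk:graded}) that $\cO_\PP(k)$ is invertible, so that the stalk $\cF_x(k)=\cO_x(k)$ is a free $\cO_x$-module of rank one. The map $\mu$ is $\cO_x$-linear, so its image is the $\cO_x$-submodule of $\cO_x(k)$ generated by the images of the pure tensors $f_j\otimes 1$, that is, by the germs at $x$ of the sections $f_jx_j\in H^0(\cO_\PP(k))$. Hence it suffices, for each $x\in\PP$, to produce a single section of $\cO_\PP(k)$ of the form $f_jx_j$ with $f_j\in H^0(\cO_\PP(k-\w_j))$ that does not vanish at $x$: such a section is a unit in $\cO_x(k)$ and therefore generates it.

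To produce such a section, first note that a point $x$ of $\PP=\Proj K[x_0,\ldots,x_n]$ does not lie in the vanishing locus of the irrelevant ideal, so $x_j(x)\neq 0$ for some index $j$. Since $k=\lcm(\w_0,\ldots,\w_n)$ is a multiple of $\w_j$, the monomial $x_j^{k/\w_j}$ is homogeneous of degree $k$ and so defines a section of $\cO_\PP(k)$, which does not vanish at $x$ because $x_j(x)\neq 0$. Moreover $k\geq\w_j$ (as $\w_j\mid k$ and $\w_j\geq 1$), so $x_j^{k/\w_j-1}$ is homogeneous of degree $k-\w_j$ and thus lies in $H^0(\cO_\PP(k-\w_j))$.

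It then remains only to assemble these pieces: taking $f_j=x_j^{k/\w_j-1}$, $f_i=0$ for $i\neq j$, and $e_x=1\in\cO_x$, we find $\mu(\sum_i f_i\otimes e_x)=x_j^{k/\w_j-1}x_j=x_j^{k/\w_j}$, whose germ at $x$ generates $\cO_x(k)$ by the reduction above. As $x\in\PP$ was arbitrary, this shows $\mu$ is everywhere surjective and $\cO_\PP$ is wgg. I do not expect a serious obstacle here; the only point requiring care is the graded-scheme bookkeeping, namely that a degree-$k$ monomial not vanishing at $x$ is genuinely a unit in the stalk of the invertible sheaf $\cO_\PP(k)$, and this is exactly what invertibility of the $\cO_\PP(j)$ buys us. The whole argument rests on the single observation that $\w_j\mid k$ lets us split a nonvanishing degree-$k$ monomial as $x_j$ times an available global section.
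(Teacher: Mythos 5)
Your argument is correct and is essentially the paper's: both hinge on the observation that $\w_j\mid k$ lets you split off $x_j^{k/\w_j-1}\in H^0(\cO_\PP(k-\w_j))$ so that $x_j^{k/\w_j-1}\cdot x_j=x_j^{k/\w_j}$ generates $\cO_\PP(k)$ over $D_+(x_j)$. You merely streamline the paper's explicit monomial factorization of an arbitrary element of $\Gamma(D_+(x_i),\cO(k))$ by appealing instead to invertibility of $\cO_\PP(k)$ and Nakayama at the stalk, but the underlying mechanism is identical.
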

\begin{proof}
We want to show that for any $i=0,\dots, n$ the map
\[
\mu_i\colon \bigoplus_{j=0}^n H^0(\cO(k-\w_j))\otimes \Gamma
(D_+(x_i),\cO)\to \Gamma (D_+(x_i),\cO(k))
\]
is surjective, where $D_+(x_i)$ denotes as usual the locus $(x_i\neq
0)$.

Let $u\in\Gamma (D_+(x_i),\cO(k))$; then $u=ax_i^{-s}$ with $s>0$ and
$a$ a monomial of degree $s\w_i+k$. Therefore $u=ax_i^{sk-s}/x_i^{sk}$
and
\[
\deg (ax_i^{sk-s})=s\w_i+k+\w_i(sk-s)=\w_isk+k=k(\w_is+1).
\]
So
$ax_i^{sk-s}$ is a monomial containing $x_i^{sk-s}$ and its degree is
a multiple of $k$.  This means that we can write $ax_i^{sk-s}=a'b$
where $a'=x_i^{k/\w_i-1}$ and has degree $k-\w_i$; then $b/x_i^{sk}$ has
degree $k(s\w_i+1)-k+\w_i-sk\w_i=\w_i$ so the map
\[
H^0(\cO(k-\w_i))\otimes \Gamma (D_+(x_i),\cO(\w_i))\To \Gamma
(D_+(x_i),\cO(k))
\]
is surjective.

Finally let us notice that $\Gamma (D_+(x_i),\cO(\w_i))\cong \Gamma
(D_+(x_i),\cO)$.

This means that the map $\mu_i$ is surjective for any $i=0,\dots n$,
and hence the map $\mu$ of Definition~\ref{df:wgg} is surjective at every point $x\in\PP$.
\end{proof}

In general, $\cO_\PP(m)$ is not globally generated in the usual sense: see
for example \cite[Theorem~4B.7]{BeltramettiRobbiano}. On the other hand we have the
following proposition.
\begin{proposition}\label{pr:ggimplieswgg}
If $m>0$ and $\cO_\PP(m)$ is globally generated then $\cO_\PP(m)$ is wgg.
\end{proposition}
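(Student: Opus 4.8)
The plan is to reduce the statement to the fact that $\cO_\PP$ itself is wgg (Proposition~\ref{pr:Owgg}), using the global generation of $\cO_\PP(m)$ merely to produce a convenient local generator. Fix $x\in\PP$. Since $\cO_\PP(m)$ is invertible (Remark~\ref{rk:graded}) and globally generated, some $g\in H^0(\cO_\PP(m))$ has nonzero image in the fibre at $x$; by Nakayama's lemma such a $g$ then generates the whole stalk $\cO_\PP(m)_x$, so multiplication by $g$ is an isomorphism $\cO_{\PP,x}\isoto\cO_\PP(m)_x$, and after tensoring by $\cO_\PP(k)_x$ an isomorphism $g\cdot\colon\cO_\PP(k)_x\isoto\cO_\PP(m+k)_x$.

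Next I would place the two maps of Definition~\ref{df:wgg} side by side at $x$: write $\mu^{\cO}$ for the map attached to $\cO_\PP$ (with target $\cO_\PP(k)_x$) and $\mu$ for the one attached to $\cO_\PP(m)$ (with target $\cO_\PP(m+k)_x$). Multiplication by $g$ sends $H^0(\cO_\PP(k-\w_j))$ into $H^0(\cO_\PP(m+k-\w_j))$ for each $j$, since $\deg g=m$, hence induces a map from the source of $\mu^{\cO}$ to the source of $\mu$; and the defining formula $\mu(\sum_j f_j\otimes e_x)=\sum_j f_j x_j e_x$ makes the resulting square commute, because applying $\mu$ to $g h_j\otimes e_x$ yields $\sum_j g h_j x_j e_x = g\cdot\mu^{\cO}(\sum_j h_j\otimes e_x)$. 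Since $\mu^{\cO}$ is surjective by Proposition~\ref{pr:Owgg} and $g\cdot$ is an isomorphism, the composite $g\cdot\circ\mu^{\cO}$ is surjective; by commutativity this composite equals $\mu$ precomposed with the source map induced by $g$, so the image of $\mu$ is all of $\cO_\PP(m+k)_x$, i.e.\ $\mu$ is surjective at $x$. As $x$ was arbitrary, $\cO_\PP(m)$ is wgg.

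The argument involves no serious computation; the only point requiring care is the first step, namely that global generation of the \emph{invertible} sheaf $\cO_\PP(m)$ furnishes, at each $x$, a \emph{single} section generating the entire stalk, so that multiplication by it is invertible there. This is precisely where invertibility and Nakayama's lemma enter, and it is what lets the global generation hypothesis be converted into the multiplicative comparison above. The restriction $m>0$ serves only to set aside the trivial case $m=0$, which is already Proposition~\ref{pr:Owgg}.
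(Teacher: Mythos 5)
Your proof is correct, but it takes a genuinely different route from the paper's. The paper argues by the same explicit monomial computation as in Proposition~\ref{pr:Owgg}: working chart by chart on $D_+(x_i)$, it writes a section of $\cO(m+k)$ over $D_+(x_i)$ as a monomial fraction $ax_i^{-s}$ and uses global generation of $\cO_\PP(m)$ to split off a global factor, leaving a local section of $\cO(\w_i)\cong\cO$ over $D_+(x_i)$. You instead reduce the statement formally to Proposition~\ref{pr:Owgg}: at each $x$ you extract a single section $g$ of the invertible sheaf $\cO_\PP(m)$ trivializing it near $x$, observe that $\mu_{\cO(m)}\circ(g\otimes\id)=g\cdot\mu_{\cO}$, and conclude surjectivity of $\mu_{\cO(m)}$ from that of $\mu_{\cO}$. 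Your route is shorter, sidesteps the monomial factorization entirely (which is the delicate point of the paper's version, where one must arrange the degree-$m$ factor to be divisible by $x_i$), makes no use of the hypothesis $m>0$, and in fact proves the more general statement that $\cF\otimes\cL$ is wgg whenever $\cF$ is wgg and $\cL$ is a globally generated invertible sheaf. The one point you should make explicit is that the invertibility of $\cO_\PP(m)$ holds only in the graded-sheaf framework of Remark~\ref{rk:graded}, whereas the paper's reference for global generation (\cite[Theorem~4B.7]{BeltramettiRobbiano}) concerns the ordinary scheme-theoretic sheaf, which need not be locally free; so you must declare that both ``globally generated'' and the stalks in Definition~\ref{df:wgg} are taken in the graded category, since the step ``one section generates the whole stalk'' is exactly where a non-invertible $\cO_\PP(m)$ would break the argument. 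With that convention fixed (as the paper does throughout), your proof is complete.
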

\begin{proof} We want to show that for any $i=0,\dots, n$ the map
\[
\mu_i\colon\bigoplus_{j=0}^n H^0(\cO(k+m-\w_j))\otimes \Gamma
(D_+(x_i),\cO)\To \Gamma (D_+(x_i),\cO(m+k))
\]
is surjective.

Let $u\in\Gamma (D_+(x_i),\cO(m))$: then $u=ax_i^{-s}$ with $s>0$ and
$a$ a monomial of degree $s\w_i+m$. Therefore $u=ax_i^{sk-s}/x_i^{sk}$
and
\[
\deg (ax_i^{sk-s})=s\w_i+m+\w_i(sk-s)=\w_isk+m
\]
Now since
$\cO(m)$ is globally generated we can write $ax_i^{sk-s}=a'b$, where
$a'$ has degree~$m$ and $b/x_i^{sk}$ has degree~$0$. This means that
we can write $a'b=a''b'$ where $a''=a'x_i^{-1}$ and has degree
$m-\w_i$, and $b'=bx_i$ so that $b'/x_i^{sk}$ has degree $\w_i$. In
this way we have that the map
\[
H^0(\cO(k+m-\w_i))\otimes \Gamma (D_+(x_i),\cO(\w_i))\To \Gamma
(D_+(x_i),\cO(m+k))
\]
is surjective.
\end{proof}

Now we prove the analogues for weighted regularity of the main
properties of Castelnuovo-Mumford regularity.
\begin{theorem}\label{th:wregproperties}
Let $\cF$ be a wregular coherent sheaf on $\PP$.
  \begin{enumerate}
  \item[(i)] $H^0(\cF(k))$ is spanned by $H^0(\cF(k-\w_0))\oplus\dots\oplus H^0(\cF(k-\w_n))$.
  \item[(ii)] $\cF$ is $m$-wregular for all $m\geq 0$.
  \item[(iii)] $\cF$ is wgg.
  \end{enumerate}
\end{theorem}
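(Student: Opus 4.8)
The plan is to establish the three parts in the order (i), (ii), (iii), obtaining (iii) from (i) and a mild strengthening of the Koszul argument used for (i). Part (i) is exactly the surjectivity on $H^0$ of the first map of the Koszul complex \eqref{eq:koszul} after twisting by $\cF(k)$: since $\cA_0=\bigoplus_j\cO(-\w_j)$ and $\cA_0\to\cO$ is $(f_j)\mapsto\sum_j x_jf_j$, tensoring with $\cF(k)$ gives on global sections the map $\bigoplus_j H^0(\cF(k-\w_j))\to H^0(\cF(k))$, $(f_j)\mapsto\sum_j x_jf_j$. First I would observe that \eqref{eq:koszul} is an exact complex of locally free graded sheaves, hence locally split, so it remains exact after $\otimes\,\cF(k)$. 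Splitting it into the short exact sequences of its syzygy sheaves and chasing cohomology, the required surjectivity follows as soon as $H^j(\cA_j\otimes\cF(k))=0$ for $j=1,\dots,n$, i.e. $H^j(\cF(k-|\w_I|))=0$ for every $I$ with $|I|=j+1$. The one numerical point is that $\bdw_j$ is the sum of the $j+1$ largest weights, so $|\w_I|\le\bdw_j$ whenever $|I|=j+1$; writing $k-|\w_I|=t+k-\bdw_j$ with $t=\bdw_j-|\w_I|\ge0$, this vanishing is precisely wregularity. I expect part (i) to be routine.

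For part (ii) the vanishing conditions are immediate: for $m\ge0$, $t\ge0$ and $1\le i\le n$ we have $t+(m+1)k-\bdw_i=(t+mk)+k-\bdw_i$ with $t+mk\ge0$, so the groups vanish by $0$-wregularity. For the non-vanishing $H^0(\cF((m+1)k))\neq0$ I would take a nonzero $s\in H^0(\cF(k))$; as the charts $D_+(x_i)$ cover $\PP$, $s$ is nonzero on some $D_+(x_i)$, and there multiplication by the non-vanishing section $x_i^{mk/\w_i}\in H^0(\cO(mk))$ (an integral power, since $\w_i\mid k$) is an isomorphism, whence $x_i^{mk/\w_i}s\neq0$ in $H^0(\cF((m+1)k))$.

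For part (iii) I would first reduce wgg to ordinary global generation of $\cF(k)$. By Definition~\ref{df:wgg} the image of $\mu_x$ is the $\cO_x$-submodule of $\cF_x(k)$ generated by the germs of the sections $x_jf_j$ with $f_j\in H^0(\cF(k-\w_j))$; by part (i) these span $H^0(\cF(k))$, so $\im\mu_x$ coincides with the image of the evaluation map $H^0(\cF(k))\otimes\cO_x\to\cF_x(k)$. Thus $\cF$ is wgg if and only if $\cF(k)$ is globally generated. To prove the latter I would upgrade part (i): the same Koszul chase gives, for every $d\ge k$,
\[
H^0(\cF(d))=\sum_{j=0}^n x_j\,H^0(\cF(d-\w_j)),
\]
since for $d\ge k$ and $|I|=j+1$ the exponent $d-|\w_I|$ equals $k-\bdw_j+t$ with $t=(d-k)+(\bdw_j-|\w_I|)\ge0$, so $H^j(\cF(d-|\w_I|))=0$ again by wregularity. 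By Serre's theorem in the graded setting (cf.\ \cite{BeltramettiRobbiano}), $\cF(Lk)$ is globally generated for $L\gg0$. I would then conclude by a Nakayama/evaluation argument carried out on each affine chart $D_+(x_i)$, where $x_i$ is a unit so that $\cF(d)\cong\cF(d+\w_i)$: using the displayed identity to push sections of $\cF(Lk)$ down toward twist $k$ and the invertibility of $x_i$ to absorb leftover powers, one shows that the germs of $H^0(\cF(k))$ already generate $\cF(k)$ at every point of $D_+(x_i)$, mirroring the explicit computation of Proposition~\ref{pr:Owgg}.

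The hard part is exactly this last step. On $\PP^n$ the analogue of the displayed identity reduces the twist by $1$, so iterating it from a high twist lands precisely on $H^0(\cF)$, the multiplication map $H^0(\cF)\otimes H^0(\cO(l))\to H^0(\cF(l))$ is onto, and the tensor/Nakayama trick yields global generation at once. In the weighted setting the reduction is by the individual weights $\w_j$, which need not divide $d-k$, so iterating from $\cF(Lk)$ does not terminate at twist $k$ and the naive multiplication map $H^0(\cF(k))\otimes H^0(\cO((L-1)k))\to H^0(\cF(Lk))$ need not be surjective. Reconciling this misalignment is the main obstacle; I expect it to be resolved chart by chart, exploiting that on $D_+(x_i)$ the single coordinate $x_i$ is invertible, exactly as the monomial bookkeeping is done for $\cO_\PP$ in Proposition~\ref{pr:Owgg}.
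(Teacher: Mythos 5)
Your treatment of parts (i) and (ii) is correct and in fact more careful than the paper, which dismisses both as ``clear'': your Koszul chase for (i), resting on $|\w_I|\le\bdw_j$ for $|I|=j+1$ so that $H^j(\cF(k-|\w_I|))=H^j(\cF(t+k-\bdw_j))$ with $t=\bdw_j-|\w_I|\ge0$, is exactly the intended argument, and your observation that the non-vanishing clause of (ii) needs its own justification (multiplication by $x_i^{mk/\w_i}$ on a chart where a nonzero section of $\cF(k)$ survives) is a genuine point the paper passes over in silence. The reduction of wgg to ordinary global generation of $\cF(k)$ via part (i) is also sound.

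The genuine gap is the one you name yourself: part (iii) is not proved. Descending from global generation of $\cF(Lk)$ to global generation of $\cF(k)$ by iterating $H^0(\cF(d))=\sum_j x_jH^0(\cF(d-\w_j))$ founders on exactly the misalignment you describe, and ``I expect it to be resolved chart by chart'' is a hope, not an argument. The paper routes around the descent entirely: for $l\in k\ZZ$, $l\gg0$, it forms the chain
\[
\bigoplus_j H^0(\cF(k-\w_j))\otimes H^0(\cO(l))\otimes\cO
\overset{\mu}\To H^0(\cF(k))\otimes H^0(\cO(l))\otimes\cO
\overset{\mu'}\To H^0(\cF(k+l))\otimes\cO
\overset{\mu''}\To \cF(k+l),
\]
with $\mu$ surjective by (i), $\mu''$ surjective because $\cF(k+l)$ is globally generated, and $\mu'$ the multiplication map; near a point $x$ it trivializes $\cO(l)$, so the $H^0(\cO(l))$ factor becomes a space of elements of the local ring $\cO_x$ that are absorbed into the $\cO_x$-coefficients in Definition~\ref{df:wgg}, while $\cF(k+l)_x$ is identified with $\cF(k)_x$. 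That localization is what replaces your ``absorb leftover powers of $x_i$'' step, and it is the missing idea in your plan. Be aware, however, that the paper's argument then rests entirely on the surjectivity of $\mu'$, i.e.\ of $H^0(\cF(k))\otimes H^0(\cO(l))\To H^0(\cF(k+l))$, which it asserts follows from global generation of $\cF(k+l)$ and $\cO(l)$; that implication is false for general sheaves and really requires the wregularity of $\cF$ (it is the weighted analogue of the second clause of Mumford's theorem), so if you adopt the paper's route you should prove that step rather than take it on trust.
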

\begin{proof}
(i) is clear from the Koszul sequence~\eqref{eq:koszul} twisted by
  $\cF(k)$. Moreover since $H^0(\cF(k))\neq 0$ the surjection is
  non-trivial.\\
(ii) is clear by the definition of wregularity.\\
(iii) we prove as follows. Choose $l\in k\ZZ$ so that $\cF(k+l)$ and
  $\cO(l)$ are globally generated, which holds for $l\gg 0$, and
  consider the (not exact!) sequence
\begin{multline*}
\bigoplus_j H^0(\cF(k-\w_j))\otimes H^0(\cO(l))\otimes \cO
\overset{\mu}\To
H^0(\cF(k))\otimes H^0(\cO(l))\otimes\cO\\
\overset{\mu'}\To
H^0(\cF(k+l))\otimes\cO
\overset{\mu''}{\To}
\cF(k+l).
\end{multline*}
Notice that $\mu$ is non-trivial and surjective by~(i), and $\mu'$ and
$\mu''$ are both surjective because $\cF(k+l)$ and $\cO(l)$ are
globally generated. Near a point $x$, fix an isomorphism between
$\cO(l)$ and $\cO$: this identifies $\cO(l)$ with $\cO$ and $\cF(k)_x$
with $\cF(k+l)_x$. Then $H^0(\cO(l))$ becomes just a vector space of
elements of the local ring $\cO_x$, so we have that $\cF(k)$ is wgg.
\end{proof}

\section{Monads on weighted projective spaces}\label{sect:monads}
In this section we assume that $n=\dim\PP\geq 3$. We begin with a
preliminary definition.
\begin{definition}\label{df:minimalmap}
Suppose that $\cE$ and $\cE'$ are vector bundles on a projective
variety $X$. A surjective map $\eta\colon \cE\to \cE'$ is said to be
\emph{minimal} if no rank~$1$ direct summand of $\cE'$ is the image of
a line subbundle of~$\cE$.
\end{definition}
Next we recall the basic definitions about monads, due to
Horrocks~\cite{Horrocks}.
\begin{definition}\label{df:monad}
A sequence of bundles on a projective variety $X$
\[
\monad{\cA}{\alpha}{\cB}{\beta}{\cC}
\]
such that $\cA$ and $\cC$ are sums of line bundles, $\alpha$ is
injective, $\beta$ is surjective and $\beta\alpha =0$ is called a
\emph{monad} on $X$.\\
The vector bundle $\cE=\frac{\ker\beta}{\im \alpha}$ is
called the \emph{homology} of the monad.\\
A monad is said to be \emph{minimal} if the maps $\alpha^\vee\colon
\cB^\vee\to \cA^\vee$ and
$\beta\colon \cB\to\cC$ are minimal.
\end{definition}
In particular if $\cB$ is a sum of line bundles, the maps $\alpha$ and
$\beta$ are just matrices and then minimal means that no matrix entry
is a non-zero scalar both in $\alpha$ and in $\beta$.

Horrocks showed in~\cite{Horrocks} that every bundle $\cE$ on $\PP^n$
with $n\geq 3$ is the homology of a minimal monad. Now we extend this
correspondence to $\PP$: we generalize the proof of Proposition $3$ in \cite{BH}. First we need a definition (see
equation~\eqref{eq:cohomologymodule} in
Section~\ref{sect:general} for the notation).
\begin{definition}\label{df:minimal_lres}
For $l\in\ZZ$, a \emph{minimal $l$-resolution} of a bundle $\cE$ is an
exact sequence
\[
0\To \cE \To \cP  \stackrel{\pi}\To  \cC \To 0
\]
in which $\cC$ splits, $\pi$ is minimal and $H^1(\cP(\geq l))=0$.
\end{definition}
\begin{theorem}\label{thm:bundlesmonad}
Every bundle $\cE$ on $\PP$ is the homology of a minimal monad with
$\cB$ satisfying
\begin{enumerate}
\item[(i)] $\Hst^1(\cB)=\Hst^{n-1}(\cB)=0$
\item[(ii)] $\Hst^i(\cB)=\Hst^i(\cE)$ if $1<i<n-1$.
\end{enumerate}
\end{theorem}
\begin{proof}
The module $\Hst^i(\cE)$ has finite length for $0<i<n$, because $\PP$
is arithmetically Cohen-Macaulay and subcanonical, and for any $t\in
\ZZ$ we have
\[
H^i(X, \cE(t))\imic H^{n-i}(X,\cE^\vee(-t+e))^\vee.
\]
We start by proving that every bundle $\cE$ has a minimal
$l$-resolution for each $l\in\ZZ$. Consider the module $H^1(\cE(\geq
l))$ and a minimal system of generators $g_1,\ldots, g_r$. For each
$i\ge l$, write $q_i$ for the number of generators in degree $i$:
since the module has finite length there is an $l_0$ such that $q_i
=0$ if $i>l_0$.  So our system $g_1,\ldots, g_r$ is an element of
\[
q_l H^1(\cE(l))\oplus\dots\oplus q_{l_0} H^1(\cE(l_0))\imic
H^1(\cE\otimes \cC^{\vee})
\]
where $\cC=q_l \cO(-l)\oplus\dots\oplus q_{l_0} \cO(-l_0)$.
But, since $H^1(\cE\otimes \cC^\vee)\imic \Ext^1 (\cC, \cE)$ we have
$\{g_i\}\in \Ext^1 (\cC, \cE)$, so we can associate an extension
\begin{equation}\label{eq:lres}
0\To \cE \To \cP  \To  \cC \To 0
\end{equation}
to our system of generators.

Now, looking at the sequence in cohomology we see that the map
\[
f\colon H^0(\cC(\geq l))\To H^1((\cE(\geq l))
\]
is surjective by construction.  Moreover, since $\PP$ is ACM, all the
intermediate cohomology of $\cC$ vanishes and we can conclude that
$H^1(\cP(\geq l))=0$. Thus the sequence \eqref{eq:lres} is an
$l$-resolution, and it is minimal because the system $\{g_i\}$ was
chosen minimal.

This $l$-resolution will be the last column of our display.

In the same way, for every $l'\in \ZZ$, we can find an $l'$-resolution
\begin{equation}\label{eq:dualres}
0\To \cE^\vee \To \cQ^\vee  \To  \cA^\vee \To 0
\end{equation}
for $\cE^\vee$ and the dual of~\eqref{eq:dualres} will be the first
row, so we have
\[
\begin{array}{ccccccccc}
 & & & & & &0 & & \\
 & & & & & &\downarrow & & \\
0& \to &\cA &\to &\cQ &\to &\cE &\to &0\\
 & & & & & &\downarrow & & \\
 & & & & & &\cP & & \\
 & & & & & &\downarrow & & \\
 & & & & & &\cC & & \\
 & & & & & &\downarrow & & \\
 & & & & & &0 & & \\
\end{array}
\]
where $\cA=\bigoplus\nolimits_i\cO(a_i)$ is a bundle without
intermediate cohomology, with $a_i\geq l'$ for all~$i$. Moreover
\[
H^{n-1}(\cQ(\leq -l'+e))\cong H^1(\cQ^\vee(\geq l'))=0.
\]
We observe that if $n\geq 3$
\[
\Ext^i(\cC, \cA) = H^i(\cC^\vee\otimes \cA)=0
\]
for $i=1,\, 2$. Then applying the functor $\Hom(\bullet ,\cA)$
to~\eqref{eq:lres} we have
\[
0=\Ext^1(\cC, \cA) \to \Ext^1(\cP, \cA)\to \Ext^1(\cE, \cA) \to
\Ext^2(\cC, \cA)=0,
\]
so
\[
\Ext^1(\cP, \cA)\imic \Ext^1(\cE, \cA).
\]
This means that the extension in our row (the dual of
\eqref{eq:dualres}) comes from the unique extension
\[
0 \To \cA \overset{\alpha}\To \cB \To \cP \To 0
\]
and we have
\[
\begin{array}{ccccccccc}
 & & & &0 & &0 & & \\
 & & & &\downarrow & &\downarrow & \\
0&\to &\cA &\to &\cQ &\to &\cE &\to &0 \\
 & &\|& &\downarrow & &\downarrow & & \\
0&\to &\cA &\xrightarrow{\alpha} &\cB &\to &\cP &\to &0\\
 & & & &\downarrow & &\downarrow & & \\
 & & & &\cC &= &\cC & & \\
 & & & &\downarrow & &\downarrow & & \\
 & & & &0 & &0 & & \\
 \end{array}.
\]
This is the display of the monad
\[
\monad{\cA}{\alpha}{\cB}{\beta}{\cC}.
\]
The minimality comes from the minimality of the two resolutions.  From
the first row we see that $\Hst^i(\cE)\cong \Hst^i(\cQ)$ for
$0<i<n-1$.  Looking at the first column in cohomology,
\[
0=H^{n-1}(\cQ(\leq -l'+e))\to H^{n-1}(\cB(\leq -l'+e)) \to
H^{n-1}(\cC(\leq -l'+e))=0,
\]
we have that $H^{n-1}(\cB(\leq -l'+e))=0$, and $\Hst^i(\cE)\cong
\Hst^i(\cQ)\cong \Hst^i(\cB)$ for $1<i<n-1$.

Looking at the second row in cohomology,
\[
0=H^1(\cA(\geq l))\to H^{n-1}(\cB(\geq l)) \to H^{n-1}(\cC(\geq l))=0,
\]
we see that $H^{1}(\cB(\geq l))=0$.  If we choose $l$ and $l'$ small
enough we get the claimed conditions~(i) and~(ii) above:
\begin{enumerate}
\item[(i)] $\Hst^1(\cB)=\Hst^{n-1}(\cB)=0$
\item[(ii)] $\Hst^i(\cB)=\Hst^i(\cE)$ for $1<i<n-1$.
\end{enumerate}
\end{proof}

If the bundles in the monad all split, we can get some results about
wregularity for $\cE$.

\begin{definition}\label{df:quasilinear}
A  monad on $\PP$  is called \emph{quasi-linear} if it has the form
\begin{equation}\label{eq:quasilinear}
\monad{\bigoplus_{i=1}^s\cO_\PP(a_i)}{\alpha}{\bigoplus_{l=1}^{r+s+t}\cO_\PP(b_l)}{\beta}{\bigoplus_{j=1}^t\cO_\PP(c_j)}.
\end{equation}
\end{definition}
By convention, we shall write the twists in increasing order, $a_i\le
a_{i+1}$, etc.: note that this is the opposite of our convention for
the weights.

We prove an analogue of \cite[Theorem~3.2]{CostaMiroRoig2}.

\begin{theorem}\label{thm:monadwreg}
Let $\cE$ be a rank~$r$ vector bundle on $\PP$ which is the homology
of a quasi-linear monad~\eqref{eq:quasilinear}. Put $c=\sum_{j=1}^t
c_t$. Then $\cE$ is $m$-wregular for any integer $m$ such that
$H^0(\cE((m+1)k))\neq 0$ and
\begin{equation}\label{eq:monadwreg}
(m+1)k\geq\max\{(n-1)c_t-(b_1+\dots+b_{t+n})-(\bdw-\bdw_1)+1+c,
-b_1+1,-a_1+1\}.
\end{equation}
\end{theorem}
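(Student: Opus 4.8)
The plan is to resolve $\cE$ through the two short exact sequences furnished by the monad and to verify the cohomological vanishing of Definition~\ref{df:wregular} one degree at a time. Writing $\cK=\ker\beta$, the monad $\monad{\cA}{\alpha}{\cB}{\beta}{\cC}$ breaks up as
\[
0\To\cA\To\cK\To\cE\To 0
\qquad\text{and}\qquad
0\To\cK\To\cB\To\cC\To 0 .
\]
Since the non-vanishing requirement $H^0(\cE((m+1)k))\neq0$ is part of the hypotheses, it remains only to show, for each $i=1,\dots,n$ and each $d\geq (m+1)k-\bdw_i$, that $H^i(\cE(d))=0$. I would fix such a $d$ and chase the two sequences, using throughout that $\PP$ is arithmetically Cohen--Macaulay, so that a split bundle $\bigoplus\cO(e_\nu)$ has $H^i=0$ for $0<i<n$, while by Serre duality (with $\omega_\PP=\cO(-\bdw)$) one has $H^n(\cO(e))=0$ whenever $e\geq -\bdw+1$. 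Note that $n\geq 3$ is available in this section.

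The middle and top degrees are the easy ones. For $2\leq i\leq n-2$ the groups $H^i(\cK(d))$ and $H^{i+1}(\cA(d))$ both vanish for purely numerical reasons, so $H^i(\cE(d))=0$ with no hypothesis on $d$. For $i=n$ the first sequence exhibits $H^n(\cE(d))$ as a quotient of $H^n(\cK(d))$, which the second embeds into $H^n(\cB(d))=\bigoplus_l H^n(\cO(b_l+d))$; this vanishes once $b_1+d\geq-\bdw+1$, and imposing it at the extreme twist $d=(m+1)k-\bdw$ gives $(m+1)k\geq -b_1+1$, the second entry of the maximum in~\eqref{eq:monadwreg}. For $i=n-1$ the first sequence embeds $H^{n-1}(\cE(d))$ into $H^n(\cA(d))=\bigoplus_i H^n(\cO(a_i+d))$ (using $H^{n-1}(\cK(d))=0$), and this vanishes once $a_1+d\geq -\bdw+1$; imposing it at $d=(m+1)k-\bdw_{n-1}$ yields $(m+1)k\geq -a_1+1-\w_n$, which is comfortably implied by the third entry $-a_1+1$.

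The genuine difficulty is $i=1$. Since $n\geq3$, the first sequence gives $H^1(\cE(d))\cong H^1(\cK(d))$, and the second identifies $H^1(\cK(d))$ with $\operatorname{coker}\bigl(H^0(\cB(d))\To H^0(\cC(d))\bigr)$; thus everything comes down to surjectivity of $\beta$ on global sections for every $d\geq (m+1)k-\bdw_1$. This cannot be read off the sequences directly: passing to Serre duals turns it into the injectivity of $H^n(\cC^\vee(-d-\bdw))\To H^n(\cB^\vee(-d-\bdw))$, which is the same assertion and so merely confirms the circularity. The way I would break the circle is to feed the Koszul complex~\eqref{eq:koszul}, tensored with $\cK^\vee$, into the computation of $H^{n-1}(\cK^\vee(-d-\bdw))$: its exactness propagates vanishing across the weighted twists $|\w_I|$ and, after $n-1$ steps, produces a Castelnuovo--Mumford--style threshold. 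This is exactly the shape of the first entry of~\eqref{eq:monadwreg}: the factor $(n-1)c_t$ and the sum $b_1+\dots+b_{t+n}$ of the $t+n$ smallest twists of $\cB$ record the successive Koszul steps, the correction $\bdw-\bdw_1=\w_2+\dots+\w_n$ is the total weight absorbed along the way, and $c=\sum_j c_j$ collects the contribution of $\cC^\vee$.

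Finally I would assemble the three thresholds: requiring $(m+1)k$ to be at least the maximum of the three quantities secures simultaneously the vanishings for $i=1$, $i=n-1$ and $i=n$, while $2\leq i\leq n-2$ is automatic, and together with the assumed non-vanishing of $H^0(\cE((m+1)k))$ this is precisely $m$-wregularity. I expect the Koszul induction controlling the $i=1$ case to be the main obstacle: it is the only step where the individual weights enter essentially, and making the bookkeeping of the weighted twists collapse to the clean expression in~\eqref{eq:monadwreg} is where the real work lies.
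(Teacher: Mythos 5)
Your treatment of the cases $2\leq i\leq n-2$, $i=n-1$ and $i=n$ is correct and coincides with the paper's: the two short exact sequences cut from the monad, together with the vanishing of intermediate cohomology of split bundles on the ACM space $\PP$ and Serre duality with $\omega_\PP=\cO_\PP(-\bdw)$, give exactly the thresholds $-b_1+1$ and $-a_1+1$. You have also correctly isolated the crux: $H^1(\cE(d))\cong H^1(\cK(d))=\operatorname{coker}\bigl(H^0(\cB(d))\to H^0(\cC(d))\bigr)$, and the whole content of the first entry of \eqref{eq:monadwreg} is the vanishing of this cokernel.

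But that case is precisely where your argument stops being a proof. You propose to control $H^1(\cK(d))$ by tensoring the Koszul complex \eqref{eq:koszul} of $\PP$ with $\cK^\vee$, and then assert that the bookkeeping ``is exactly the shape of the first entry.'' It is not: the Koszul complex \eqref{eq:koszul} only ever introduces twists by the weight sums $|\w_I|$, so no amount of chasing it can produce the quantities $(n-1)c_t$ and $b_1+\dots+b_{t+n}$, which depend on the twists of $\cB$ and $\cC$ rather than on the weights. The tool that actually generates these numbers (and which the paper uses, following \cite{CostaMiroRoig2}) is the Buchsbaum--Rim complex of the surjection $\beta\colon\cF=\bigoplus_l\cO_\PP(b_l)\to\cG=\bigoplus_j\cO_\PP(c_j)$. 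Cutting that complex into short exact sequences and climbing the cohomology ladder gives
\[
h^1(\cK(p))=h^2(\cK_2(p-c))=\dots=h^n(\cK_n(p-c))\leq
h^n\bigl(S^{n-1}\cG^\vee\otimes\wedge^{t+n}\cF\otimes\cO_\PP(p-c)\bigr),
\]
and it is the last term, a sum of line bundles $\cO_\PP(d_q)$ with $d_q=(b_{l_1}+\dots+b_{l_{t+n}})-(c_{j_1}+\dots+c_{j_{n-1}})$, whose $H^n$ vanishes exactly when $p\geq (n-1)c_t-(b_1+\dots+b_{t+n})-\bdw+1+c$; evaluating at the relevant twist $p=(m+1)k-\bdw_1$ then yields the first entry of the maximum. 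So the gap is not mere bookkeeping you deferred: the complex you chose cannot close it, and the correct complex is a different one. Everything else in your outline survives once this is replaced.
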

\begin{proof}
Let us consider the short exact sequences from the display of the
monad:
\[
0\to
\cK\to\bigoplus_{l=1}^{r+s+t}\cO_\PP(b_l)\xrightarrow{\beta}\bigoplus_{j=1}^t\cO_\PP(c_j)\to
0
\]
and
\[
0\to\bigoplus_{i=1}^s\cO_\PP(a_i)\to \cK\to \cE\to 0.
\]
We get $H^i(\cK(p))=H^i(\cE(p))=0$ for any integer $p$ and any
$i=2,\dots , n-2$. Moreover if $p\geq\max\{-b_1-\bdw+1,-a_1-\bdw+1\}$ we
have also $H^i(\cK(p))=H^i(\cE(p))=0$ for $i\geq n-1$. So if
$(m+1)k\geq\max\{-b_1+1,-a_1+1\}$ we may conclude that
\[
H^n(\cE((m+1)k-\bdw)=0.
\]
To see which are the $p$ for which $H^1(\cK(p))\cong H^1(\cE(p))=0$,
we consider the Buchsbaum-Rim complex associated to
\[
\cF=\bigoplus_{l=1}^{r+s+t}\cO_\PP(b_l)
\overset{\beta}{\relbar\joinrel\twoheadrightarrow}
\cG=\bigoplus_{j=1}^t\cO_\PP(c_j),
\]
which is the complex
\begin{multline}\label{eq:buchsbaumrim}
S^{r+s-1}\cG^\vee\otimes\wedge^{r+s+t}\cF \to
S^{r+s-2}\cG^\vee\otimes\wedge^{r+s+t-1}\cF \to \dots \to
S^{2}\cG^\vee\otimes\wedge^{3+t}\cF\\
\to \cG^\vee\otimes\wedge^{2+t}\cF
\to \wedge^{1+t}\cF\to \cF\otimes\cO_\PP(c)\to \cG\otimes\cO_\PP(c)\to
0.
\end{multline}
We cut \eqref{eq:buchsbaumrim} into short exact sequences
\begin{align*}
 0&\to \cK\otimes\cO_\PP(c)\to \cF\otimes\cO_\PP(c)\to \cG\otimes\cO_\PP(c)\to 0,\\
 0&\to \cK_2\to \wedge^{1+t}\cF\to \cK\otimes\cO_\PP(c)\to 0,\\
 0&\to \cK_3\to \cG^\vee\otimes\wedge^{2+t}\cF\to \cK_2\to 0,\\
 &\qquad\qquad\vdots\\
 0&\to \cK_n\to S^{n-2}\cG^\vee\otimes\wedge^{t+n-1}\cF\to \cK_{n-1}\to 0,\\
 0&\to \cK_{n+1}\to S^{n-1}\cG^\vee\otimes\wedge^{t+n}\cF\to \cK_n\to 0.
\end{align*}
Note that
\[
S^{n-1}\cG^\vee\otimes\wedge^{t+n}\cF=\bigoplus_q\cO_\PP(d_q)
\]
where $d_q=(b_{l_1}+\dots +b_{l_{t+n}})-(c_{j_1}+\dots +c_{j_{n-1}})$
with $l_1<\dots < l_{t+n}$ and $j_1\leq\dots \leq j_{n-1}$.  Now from
the cohomological exact sequences associated to the above short exact
sequences tensored by $\cO_\PP(p-c)$ we get
\begin{multline}
h^1(\cK(p))=h^2(\cK_2(p-c))=\dots =h^n(\cK_n(p-c))\\
\qquad\leq h^n(S^{n-1}\cG^\vee\otimes\wedge^{t+n}\cF\otimes\cO_\PP(p-c))
\end{multline}
which is zero if $p\geq (n-1)c_t-(b_1+\dots +b_{t+n})-\bdw+1+c$. In
fact, since
\[
(n-1)c_t-(c_{j_1}+\dots +c_{j_{n-1}})\geq 0
\]
and
\[
(b_{l_1}+\dots +b_{l_{t+n}})-(b_1+\dots +b_{t+n})\geq 0,
\]
we have $d_q+p-c\geq -\bdw$. So we get
\[
H^1(\cE((m+1)k-(\w_{n-1}+\w_n)))=0
\]
if $(m+1)k\geq (n-1)c_t-(b_1+\dots+b_{t+n})-(\w_0+\dots +\w_{n-2})+1+c$.
\end{proof}
\begin{remark}\label{rk:unweighted}
In the case of $\PP=\PP^n$ the bound~\eqref{eq:monadwreg} reduces to
\[
m+1\geq\max\{(n-1)c_t-(b_1+\dots+b_{t+n})-(n-1)+1+c, -b_1+1,-a_1+1\}
\]
which is precisely the bound of \cite[Theorem~3.2]{CostaMiroRoig2}
\end{remark}

Finally we want to discuss the sharpness of the bound in
Theorem~\ref{thm:monadwreg}.

\begin{example}\label{ex:sharp}
Take $\PP=\PP(3,2,2,1)$ and consider the bundle $\cE$ given by the
monad
\[
\monad{\cO_\PP(-2)}{\alpha}{\cO_\PP(-1)\oplus\cO_\PP^{\oplus
  2}\oplus\cO_\PP(1)}{\alpha^\vee}{\cO_\PP(2)},
\]
where $\alpha^\vee=(x_0,x_1,x_2,x_3)$. In this case the bound given
by~\eqref{eq:monadwreg} is sharp.
\end{example}

In fact, we have $k=3$, $a_1=-2, b_1=-1, b_2=b_3=0, b_4=1$ and
$c_t=c=c_1=2$, so we get
\[
(m+1)3\geq\max\{(2)2-(0)-(3)+1+2, 1+1,2+1\}=4,
\]
so $m=1$. On the other hand we notice that $\cE$ is not wregular
(i.e.\ we cannot take $m=0$), so the bound is sharp. In fact from the
sequences
\[
0\to \cK\to \cO_\PP(-1)\oplus\cO_\PP^{\oplus
  2}\oplus\cO_\PP(1)\to\cO_\PP(2)\to 0
\]
and
\[
0\to\cO_\PP(-2)\to \cK\to E\to 0,
\]
we get $H^3(\cE(3-8))\neq 0$.

\end{document}